\newtheorem{lemma}{Lemma}
\newtheorem{theorem}{Theorem}
\theoremstyle{definition}
\theoremstyle{plane}
\def \beq{ \begin{equation} }
\def \eeq{\end{equation}}
\title{The continuous transition of Hamiltonian vector fields through manifolds of constant curvature}
\begin{document}
\maketitle
\markboth{Florin Diacu, Slim Ibrahim, and J\textogonekcentered{e}drzej \'Sniatycki}{The continuous transition of vector fields through manifolds of constant curvature}
\author{\begin{center}
{\bf Florin Diacu,  Slim Ibrahim, and J\textogonekcentered{e}drzej \'Sniatycki}\\
\smallskip
{\footnotesize Pacific Institute for the Mathematical Sciences\\
and\\
Department of Mathematics and Statistics\\
University of Victoria\\
P.O.~Box 1700 STN CSC\\
Victoria, BC, Canada, V8W 2Y2\\
diacu@uvic.ca, ibrahims@uvic.ca, sniatyck@ucalgary.ca\\
}\end{center}

}

\begin{center}
\today
\end{center}

\begin{abstract}
We ask whether Hamiltonian vector fields defined on spaces of constant Gaussian curvature $\kappa$ (spheres, for $\kappa>0$, and hyperbolic spheres, for $\kappa<0$), pass continuously through the value $\kappa=0$ if the potential functions $U_\kappa, \kappa\in\mathbb R$, that define them satisfy the property $\lim_{\kappa\to 0}U_\kappa=U_0$, where $U_0$ corresponds to the Euclidean case. We prove that the answer to this question is positive, both in the 2- and 3-dimensional cases, which are of physical interest, and then apply our conclusions to the gravitational $N$-body problem.
\end{abstract}

\section{Introduction}

The attempts to extend classical equations of mathematical
physics from Euclidean space to more general manifolds is
not new. This challenging trend started in the 1830s with the work of J\'anos Bolyai and Nikolai Lobachevsky, who tried to generalize the 2-body problem of celestial mechanics to hyperbolic space, \cite{Bol}, \cite{Lob}. This particular topic is much researched today in the context of the $N$-body problem in spaces of constant curvature, \cite{Diacu01}, \cite{Diacu02}, \cite{Diacu03}, \cite{Diacu04}, \cite{DK}, \cite{DPR}, \cite{DT}. But extensions to various manifolds have also been pursued for PDEs, even beyond the boundaries of classical mechanics, such as for Schr\"odinger's equation \cite{Banica}, \cite{Burq}, \cite{Ionescu}, and the Vlasov-Poisson system, \cite{DILS}, to mention just a couple.

In all these problems it is important to understand whether the extension of the equations has any physical sense. This issue, however, is specific to each case and cannot be treated globally. The natural generalization of the gravitational $N$-body problem to spheres and hyperbolic spheres, for instance, is justified through properties preserved by the Kepler potential, which is, for any constant value of the curvature, a harmonic function that keeps all bounded orbits closed, \cite{Diacu03}, \cite{Diacu04}. Other problems need different unifying justifications.

But a basic aspect of common interest to all attempts of extending the dynamics from Euclidean space to more general manifolds is that the classical equations are recovered when the manifolds are flattened. In the absence of this property, any extension is devoid of meaning. Of course, this attribute alone is not enough to warrant a specific generalization, since many extensions can manifest this quality, so a choice must be also based on other criteria. Nevertheless, checking the presence of this feature is an indispensable first step towards justifying any extension of the classical equations to more general spaces.

In this note we've set a modest goal in the basic direction mentioned above. We would like to see whether Hamiltonian vector fields defined on spheres and hyperbolic spheres tend to the classical Euclidean vector field as the curvature approaches zero, assuming that the classical potential is reached in the limit in a sense that will be made precise. As we will see, the answer to this problem is positive, which may be surprising, given the fact that, from the geometrical point of view, 2-spheres do not project isometrically on the Euclidean plane, and the same is true for 3-spheres relative to the Euclidean space. The subtle point of the results we obtain stays with how the limit is taken. 

To formalize this problem, let us consider a family of Hamiltonian vector fields, given by potential functions $U_\kappa$, $\kappa\in \mathbb R$, defined on spheres of Gaussian curvature $\kappa>0$,
$$
\mathbb S_\kappa^2=\{(x,y,z) \ \! | \ \! x^2+y^2+z^2=\kappa^{-1}\},
$$
embedded in the Euclidean space $\mathbb R^3$, as well as hyperbolic spheres of curvature $\kappa<0$, 
$$
\mathbb H_\kappa^2=\{(x,y,z) \ \! | \ \! x^2+y^2-z^2=\kappa^{-1}\},
$$
embedded in the Minkowski space $\mathbb R^{2,1}$, such that $U_\kappa$ tends to a potential $U_0$ of the Euclidean plane as $\kappa\to 0$ while the Euclidean distances in $\mathbb R^3$ and the Minkowski distances in $\mathbb R^{2,1}$ are kept fixed when $\kappa\to 0$. This way of approaching the limit is essential here, since allowing distances to vary with the curvature makes all the points of the sphere and the hyperbolic sphere run to infinity as $\kappa\to 0$.

We would like to know whether the equations of motion defined for $\kappa\ne 0$ tend to the classical equations of motion defined in flat space, i.e.\ for $\kappa=0$. We are also interested in answering the same question in the 3-dimensional case, in other words to decide what happens when the manifolds $\mathbb S_\kappa^2$ and
$\mathbb H_\kappa^2$ are replaced by 3-spheres,
$$
\mathbb S_\kappa^3=\{(x,y,z,w) \ \! | \ \! x^2+y^2+z^2+w^2=\kappa^{-1}\}
$$
(embedded in $\mathbb R^4$), and hyperbolic 3-spheres,
$$
\mathbb H_\kappa^3=\{(x,y,z,w) \ \! | \ \! x^2+y^2+z^2-w^2=\kappa^{-1}\}
$$
(embedded in the Minkowski space $\mathbb R^{3,1}$), respectively. Of course, we could work in general for any finite space dimension, but the cases of main physical interest are dimensions 2 and 3. Moreover, we would like to explicitly see what the equations of motion look like in these two particular cases, such that we can use them in the gravitational $N$-body problem. Another reason for including both the 2- and 3-dimensional case in this note, instead of treating only the latter, is that the former allows the reader to build intuition and  make then an easy step to the higher dimension.

To address the above issues for $N$ point masses, let us initially consider the more general problem in which the bodies are interacting on a complete, connected, $n$-dimensional Riemannian manifold $\mathcal M$ ($n$ being a fixed positive integer), under a law given by a potential function. So let $N$ point particles (bodies) of masses $m_1,\dots, m_N>0$ move on the manifold $\mathcal M$. In some suitable coordinate system, the position and velocity of the body $m_r$ are described by the vectors 
$$
{\bf x}_r=(x_1^r,\dots,x_n^r), \ \ \dot{\bf x}_r=(\dot x_1^r,\dots,\dot x_n^r),\ \ r=\overline{1,N},
$$
respectively. We attach to every particle $m_r$ a metric tensor given by the matrix $G_r=(g_{ij}^r)$, and its inverse, $G_r^{-1}=(g_r^{ij})$, at the point of the manifold $\mathcal M$ where the particle $m_r, r=\overline{1,N},$ happens to occur at a given time instant.

The law of motion is given by a sufficiently smooth potential function, 
$$
U\colon{\mathcal M}^N\setminus\Delta\to(0,\infty),\ \ U=U({\bf x}),
$$ where ${\bf x}=({\bf x}_1,\dots, {\bf x}_N)$ is 
the configuration of the particle system and $\Delta$
represents the set of singular configurations, i.e.\ positions of the bodies for which the potential is not defined. Then $U$ generates a Lagrangian function
\begin{equation}
L({\bf x}, \dot{\bf x})={1\over 2}\sum_{r=1}^N\sum_{i,j=1}^nm_rg_{ij}^r\dot{x}_i^r\dot{x}_j^r
-U({\bf x}),
\label{lag}
\end{equation}
which corresponds to a Hamiltonian vector field.
In this setting, we can derive the equations of motion on the Riemannian manifold $\mathcal M$ in the result stated below, whose proof we also provide for the completeness
of our presentation.

\begin{lemma}
Assume that the point masses $m_1,\dots, m_N>0$ interact on the complete and connected $n$-dimensional Riemannian manifold $\mathcal M$ under the law imposed by Lagrangian \eqref{lag}. Then the system of differential equations describing the motion of these particles
has the form
\begin{equation}\label{motion}
m_r\ddot{x}_s^r=-\sum_{i=1}^ng_r^{si}\frac{\partial U}{\partial{x}_i^r}
-m_r\sum_{l,j=1}^n\Gamma_{lj}^{s,r}\dot{x}_l^r\dot{x}_j^r, \ s=\overline{1,n},\
r=\overline{1,N},
\end{equation}
where 
\begin{equation}\label{Christoffel}
\Gamma_{lj}^{s,r}=\frac{1}{2}\sum_{i=1}^ng_r^{si}\left(\frac{\partial g_{il}^r}{\partial{x}_j^r}+\frac{\partial g_{ij}^r}{\partial x_l^r}-\frac{\partial g_{lj}^r}{\partial x_i^r}\right), \ s=\overline{1,n},
\end{equation}
are the Christoffel symbols corresponding to the particle $m_r,\  r=\overline{1,N}$.
\end{lemma}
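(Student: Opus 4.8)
The statement is the classical Euler--Lagrange derivation of the equations of motion for a natural mechanical system, so my plan is to apply the Euler--Lagrange equations $\frac{d}{dt}\frac{\partial L}{\partial \dot x_s^r}-\frac{\partial L}{\partial x_s^r}=0$ to the Lagrangian \eqref{lag} and then reorganize the result until the Christoffel symbols \eqref{Christoffel} emerge. First I would exploit the block structure of the kinetic energy: since the metric $G_r=(g_{ij}^r)$ attached to $m_r$ depends only on the coordinates ${\bf x}_r$ of that same particle, differentiation in $\dot x_s^r$ or in $x_s^r$ acts on a single summand of the double sum over $r$, so no cross terms between distinct particles appear in the kinetic part (coupling enters only through $U$). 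Using the symmetry $g_{ij}^r=g_{ji}^r$, this gives the conjugate momentum $\frac{\partial L}{\partial \dot x_s^r}=m_r\sum_{j=1}^n g_{sj}^r\dot x_j^r$.

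Next I would take the total time derivative of this momentum, using the chain rule $\frac{d}{dt}g_{sj}^r=\sum_{l=1}^n\frac{\partial g_{sj}^r}{\partial x_l^r}\dot x_l^r$, which produces the acceleration term $m_r\sum_j g_{sj}^r\ddot x_j^r$ together with a term quadratic in the velocities. Differentiating the kinetic energy with respect to the position yields $\frac{1}{2}m_r\sum_{i,j}\frac{\partial g_{ij}^r}{\partial x_s^r}\dot x_i^r\dot x_j^r$, while the potential contributes $-\frac{\partial U}{\partial x_s^r}$. Collecting these leaves an identity of the form $m_r\sum_j g_{sj}^r\ddot x_j^r=-\frac{\partial U}{\partial x_s^r}-(\text{quadratic in velocities})$.

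The one place where a little care is needed is the rearrangement of the quadratic term. Since $\dot x_l^r\dot x_j^r$ is symmetric under $l\leftrightarrow j$, I would symmetrize $\sum_{l,j}\frac{\partial g_{sj}^r}{\partial x_l^r}\dot x_l^r\dot x_j^r=\frac{1}{2}\sum_{l,j}\bigl(\frac{\partial g_{sj}^r}{\partial x_l^r}+\frac{\partial g_{sl}^r}{\partial x_j^r}\bigr)\dot x_l^r\dot x_j^r$ and combine it with the position-derivative term, producing precisely the bracket $\frac{\partial g_{sj}^r}{\partial x_l^r}+\frac{\partial g_{sl}^r}{\partial x_j^r}-\frac{\partial g_{lj}^r}{\partial x_s^r}$ that appears, after relabeling of indices, inside \eqref{Christoffel}.

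Finally I would isolate the accelerations by contracting with the inverse metric: multiplying through by $g_r^{si}$ and summing over the lowered index uses $\sum_{a} g_r^{sa}g_{aj}^r=\delta_j^s$ to turn $m_r\sum_j g_{sj}^r\ddot x_j^r$ into $m_r\ddot x_s^r$, sends $\frac{\partial U}{\partial x_s^r}$ to $\sum_i g_r^{si}\frac{\partial U}{\partial x_i^r}$, and converts the symmetrized quadratic term into $m_r\sum_{l,j}\Gamma_{lj}^{s,r}\dot x_l^r\dot x_j^r$, which is exactly \eqref{motion}. There is no genuine analytic obstacle here; the entire content is index bookkeeping, so the only thing to watch is the symmetrization step and the consistent relabeling of summation indices needed to match the definition \eqref{Christoffel} verbatim.
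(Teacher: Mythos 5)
Your proposal is correct and follows essentially the same route as the paper: compute the Euler--Lagrange equations for \eqref{lag}, symmetrize the velocity-quadratic term using the symmetry of $\dot x_l^r\dot x_j^r$ in $l\leftrightarrow j$, and contract with the inverse metric via $\sum_i g_r^{si}g_{il}^r=\delta_{sl}$ to isolate the accelerations and produce the Christoffel symbols \eqref{Christoffel}. The only cosmetic difference is the order of the last two steps (the paper contracts first and symmetrizes afterwards), which changes nothing.
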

\begin{proof}
Recall first that $g_{ij}^r=g_{ji}^r$ and $g^{ij}_r=g^{ji}_r$ for $i,j=\overline{1,n}$. A simple computation leads to
$$
\frac{\partial L}{\partial x_i^r}=\frac{1}{2}\sum_{l,j=1}^nm_r\frac{\partial g_{lj}^r}{\partial x_i^r}\dot{x}_l^r\dot{x}_j^r-\frac{\partial U}{\partial x_i^r}, \ \ \ \
\frac{\partial L}{\partial\dot{x}_i^r}=\sum_{l=1}^nm_rg_{il}^r\dot{x}_l^r,
$$
$$
\frac{d}{dt}\left(\frac{\partial L}{\partial\dot{x}_i^r} \right)=\sum_{l=1}^nm_rg_{il}^r
\ddot{x}_l^r+\sum_{l,j=1}^nm_r\frac{\partial g_{ij}^r}{\partial x_l^r}\dot{x}_l^r\dot{x}_j^r.
$$
Then the Euler-Lagrange equations,
$$
\frac{d}{dt}\left(\frac{\partial L}{\partial\dot{x}_i^r} \right)=\frac{\partial L}{\partial x_i^r},
\  \ i=\overline{1,n},\ \ r=\overline{1,N},
$$
which describe the motion of the $N$ particles, take the form
$$
m_r\sum_{l=1}^ng_{il}^r
\ddot{x}_l^r+m_r\sum_{l,j=1}^n\left(\frac{\partial g_{ij}^r}{\partial x_l^r}
-\frac{1}{2}\frac{\partial g_{lj}^r}{\partial x_i^r}\right)\dot{x}_l^r\dot{x}_j^r=-\frac{\partial U}{\partial x_i^r}, \  i=\overline{1,n},\ r=\overline{1,N}.
$$
Let us fix $r$ in the above equations, multiply the $i$th equation by $g_r^{si}$,
and add all $n$ equations thus obtained. The result is 
$$
m_r\sum_{i,l=1}^ng_r^{si}g_{il}^r
\ddot{x}_l^r+m_r\sum_{i,l,j=1}^ng_r^{si}\left(\frac{\partial g_{ij}^r}{\partial x_l^r}
-\frac{1}{2}\frac{\partial g_{lj}^r}{\partial x_i^r}\right)\dot{x}_l^r\dot{x}_j^r=
-\sum_{i=1}^ng_r^{si}\frac{\partial U}{\partial x_i^r}
, \ r=\overline{1,N}.
$$ 
Using the fact that $\sum_{i=1}^ng_r^{si}g_{il}^r=\delta_{sl}$,
where $\delta_{sl}$ is the Kronecker delta, and the identity
$$
\sum_{j,l=1}^n\frac{\partial g_{ij}^r}{\partial x_l^r}\dot{x}_l^r\dot{x}_j^r=
\frac{1}{2}\sum_{j,l=1}^n\left(\frac{\partial g_{il}^r}{\partial x_j^r}+
\frac{\partial g_{ij}^r}{\partial x_l^r}\right)\dot{x}_l^r\dot{x}_j^r,
$$
which is easy to prove by expanding the double sums, the above equations become those given in the statement of the lemma, a remark that completes the proof.
\end{proof}

\section{The 2-dimensional case}

In this section we will prove the continuity of Hamiltonian vector fields through the value $\kappa=0$ of the curvature, i.e.\ when we move from   $\mathbb S_\kappa^2$ to $\mathbb H_\kappa^2$ through $\mathbb R^2$. For this purpose, let us first define some trigonometric functions that unify circular and hyperbolic trigonometry, namely the $\kappa$-sine function, ${\rm sn}_\kappa$, as
$$
{\rm sn}_{\kappa}s:=\left\{
\begin{array}{rl}
{\kappa}^{-1/2}\sin{\kappa}^{1/2}s & {\rm if }\ \ \kappa>0\\
s & {\rm if }\ \ \kappa=0\\
|{\kappa}|^{-{1/2}}\sinh |{\kappa}|^{1/2}s & {\rm if }\ \ \kappa<0,
\end{array}  \right.
$$
the $\kappa$-cosine function, ${\rm csn}_\kappa$, as
$$
{\rm csn}_{\kappa}s:=\left\{
\begin{array}{rl}
\cos{\kappa}^{1/2}s & {\rm if }\ \ \kappa>0\\
1 & {\rm if }\ \ \kappa=0\\
\cosh |{\kappa}|^{1/2}s & {\rm if }\ \ \kappa<0,
\end{array}  \right.
$$
as well as the functions $\kappa$-tangent, ${\rm tn}_\kappa$, and $\kappa$-cotangent,
${\rm ctn}_\kappa$, as 
$${\rm tn}_{\kappa}s:={{\rm sn}_{\kappa}s\over {\rm csn}_{\kappa}s}\ \ \ {\rm and}\ \ \
{\rm ctn}_{\kappa}s:={{\rm csn}_{\kappa}s\over {\rm sn}_{\kappa}s},$$
respectively. The following relationships, which will be useful in subsequent computations, follow from the above definitions:
$$
{\kappa}\ \! {\rm sn}_{\kappa}^2s+{\rm csn}_{\kappa}^2s=1,
$$
$$
\frac{d}{ds}{\rm sn}_\kappa s={\rm csn}_\kappa s\ \ \ {\rm and}\ \ \ \frac{d}{ds}{\rm csn}_\kappa s=-\kappa \ \!{\rm sn}_\kappa s.
$$
Also notice that all the above unified trigonometric functions are continuous relative to $\kappa$.

We can now prove the following result.

\begin{theorem}
Consider the point masses (bodies) $m_1,\dots,m_N$ on the manifold
$\mathbb M_\kappa^2$ (representing $\mathbb S_\kappa^2$ or $\mathbb H_\kappa^2$), $\kappa\ne 0$, whose positions are given by spherical coordinates $(s_r,\varphi_r), r=\overline{1,N}$. Then the equations of motion for these bodies are
\begin{equation}\label{motion2}
\begin{cases}
\ddot s_r=-\dfrac{\partial U_\kappa}{\partial s_r}+\dot\varphi_r^2\ \!{\rm sn}_\kappa s_r \ \!{\rm csn}_\kappa s_r\cr
\ddot\varphi_r=-{\rm sn}_\kappa^{-2}s_r\ \!\dfrac{\partial U_\kappa}{\partial\varphi_r}-2 \dot s_r\dot\varphi_r\ \!{{\rm ctn}_\kappa}s_r, \ \ r=\overline{1,N},
\end{cases}
\end{equation}
where
$$
U_\kappa\colon(\mathbb M_\kappa^2)^N\setminus\Delta_\kappa\to(0,\infty),\ \ \kappa\ne 0, 
$$
represent the potentials, with the sets $\Delta_\kappa,\ \kappa\in\mathbb R,$ corresponding to singular configurations. Moreover, if
$
\lim_{\kappa\to 0}U_\kappa=U_0
$
while the Euclidean distances in $\mathbb R^3$ (and the Minkowski distances in $\mathbb R^{2,1}$) between the North Pole $N=(0,0,0)$ and the bodies are kept fixed, where
$$
U_0\colon(\mathbb R^2)^N\setminus\Delta_0\to(0,\infty)
$$ 
is the potential in the Euclidean case and $\Delta_0$ is the
corresponding set of singular configurations, then system \eqref{motion2} tends to the classical equations
\begin{equation}\label{Newton2}
\ddot x_r=-\dfrac{\partial U_0}{\partial x_r},\ \
\ddot y_r=-\dfrac{\partial U_0}{\partial y_r}, \ \ r=\overline{1,N}.
\end{equation}
\end{theorem}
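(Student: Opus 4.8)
The plan is to split the argument into three stages: derive the polar form \eqref{motion2} from the Lemma, pass to the limit $\kappa\to 0$ in the coefficients, and then identify the limiting system with the Cartesian Newton equations \eqref{Newton2}.

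First I would introduce geodesic polar coordinates $(s,\varphi)$ centred at the chosen pole (recentred so that it sits at $N=(0,0,0)$) and use the embeddings $x={\rm sn}_\kappa s\cos\varphi$, $y={\rm sn}_\kappa s\sin\varphi$, with the remaining ambient coordinate proportional to ${\rm csn}_\kappa s$; using the identity $\kappa\,{\rm sn}_\kappa^2 s+{\rm csn}_\kappa^2 s=1$ one checks that these parametrize $\mathbb S_\kappa^2$ and $\mathbb H_\kappa^2$ and that in both cases the induced metric is $g_\kappa=ds^2+{\rm sn}_\kappa^2 s\,d\varphi^2$, i.e.\ $g^r=\mathrm{diag}(1,{\rm sn}_\kappa^2 s_r)$. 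From \eqref{Christoffel}, together with $\frac{d}{ds}{\rm sn}_\kappa s={\rm csn}_\kappa s$, the only nonzero Christoffel symbols are $\Gamma^{1,r}_{22}=-{\rm sn}_\kappa s_r\,{\rm csn}_\kappa s_r$ and $\Gamma^{2,r}_{12}=\Gamma^{2,r}_{21}={\rm ctn}_\kappa s_r$. Substituting these, with $g_r^{11}=1$ and $g_r^{22}={\rm sn}_\kappa^{-2}s_r$, into the Lemma's equations \eqref{motion} (and dividing through by $m_r$) yields \eqref{motion2} directly.

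The heart of the argument is the limit. The key observation is that all $\kappa$-dependence of the coefficients in \eqref{motion2} enters only through ${\rm sn}_\kappa$, ${\rm csn}_\kappa$, ${\rm ctn}_\kappa$ and their reciprocals, each continuous in $\kappa$ at $\kappa=0$, with ${\rm sn}_\kappa s\to s$, ${\rm csn}_\kappa s\to 1$, ${\rm ctn}_\kappa s\to 1/s$ and ${\rm sn}_\kappa^{-2}s\to 1/s^2$. Equivalently $g_\kappa\to ds^2+s^2 d\varphi^2$, which is the Euclidean metric in polar coordinates; this convergence, together with that of the first $s$-derivatives, forces the Christoffel coefficients to their flat-polar values $\Gamma^1_{22}=-s$, $\Gamma^2_{12}=1/s$. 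Here the hypothesis on distances is essential: only by holding the ambient (chord) distances to $N$ fixed do $s_r$ and the height coordinate stay bounded, so that the limit metric is the flat polar metric rather than a degenerate object with all points escaping to infinity. Combined with $\lim_{\kappa\to 0}U_\kappa=U_0$ — which I read, in line with the paper's stated intention, as convergence also of the first partials $\partial U_\kappa/\partial s_r\to\partial U_0/\partial s_r$ and $\partial U_\kappa/\partial\varphi_r\to\partial U_0/\partial\varphi_r$ on the non-singular set — the right-hand sides of \eqref{motion2} converge, uniformly on compact subsets of $\{s_r>0\}\setminus\Delta_0$, to the system $\ddot s_r=-\partial U_0/\partial s_r+s_r\dot\varphi_r^2$ and $\ddot\varphi_r=-s_r^{-2}\partial U_0/\partial\varphi_r-2s_r^{-1}\dot s_r\dot\varphi_r$. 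These are precisely the equations the Lemma produces for the flat metric in polar coordinates; writing the same Euler--Lagrange system in Cartesian coordinates $x_r=s_r\cos\varphi_r$, $y_r=s_r\sin\varphi_r$, where $g_{ij}=\delta_{ij}$ and every Christoffel symbol in \eqref{Christoffel} vanishes, the Lemma reduces it to $\ddot x_r=-\partial U_0/\partial x_r$, $\ddot y_r=-\partial U_0/\partial y_r$, i.e.\ \eqref{Newton2}; since $\kappa$ never enters the ${\rm sn}_\kappa$-functions in a sign-dependent way, the passage is valid simultaneously for $\kappa>0$ and $\kappa<0$, so the transition through $\kappa=0$ is continuous.

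The main obstacle is not computational but conceptual, and it lies in the limit step. The geometric danger the construction must avoid is that $\mathbb S_\kappa^2$ does not project isometrically onto the plane, so a careless flattening distorts distances; the device of fixing ambient distances is exactly what guarantees $g_\kappa\to g_0$ in $C^1$ away from the pole and hence controls the otherwise singular factors ${\rm ctn}_\kappa s_r$ and ${\rm sn}_\kappa^{-2}s_r$, which are harmless only because the bodies stay away from $s_r=0$ and from collisions. The remaining delicate point is legitimately commuting the limit with differentiation, i.e.\ upgrading $U_\kappa\to U_0$ to convergence of the relevant partial derivatives; this is where the precise sense of the limit must be pinned down. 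Once that is granted, the convergence of \eqref{motion2} to \eqref{Newton2} follows from the continuity in $\kappa$ of the unified trigonometric functions.
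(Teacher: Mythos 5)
Your proposal is correct and follows essentially the same route as the paper: derive the metric $\mathrm{diag}(1,{\rm sn}_\kappa^2 s_r)$ and the Christoffel symbols from the embedding, substitute into the Lemma to obtain \eqref{motion2}, pass to the limit via the continuity in $\kappa$ of the unified trigonometric functions (with the fixed ambient-distance convention keeping $s_r\to\tau_r$ finite), and finally convert the flat polar system to Cartesian coordinates to recover \eqref{Newton2}. Your added care about upgrading $U_\kappa\to U_0$ to convergence of the first partials is a point the paper leaves implicit, but it does not change the argument.
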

\begin{proof}
In $\mathbb M_\kappa^2$, the equations of motion \eqref{motion} for the $N$-body system take the form 
\begin{equation}\label{one-body}
\small
\begin{cases}
\ddot{x}_1^r=-g^{11}_r\dfrac{\partial U_\kappa}{\partial x_1^r}-g^{12}_r\dfrac{\partial U_\kappa}{\partial x_2^r}-\Gamma_{11}^{1,r}(\dot x_1^r)^2-(\Gamma_{12}^{1,r}+\Gamma_{21}^{1,r})\dot x_1^r\dot x_2^r-\Gamma_{22}^{1,r}(\dot x_2^r)^2\cr
\vspace{-0.35cm}\cr
\ddot{x}_2^r=-g^{21}_r\dfrac{\partial U_\kappa}{\partial x_1^r}-g^{22}_r\dfrac{\partial U_\kappa}{\partial x_2^r}-\Gamma_{11}^{2,r}(\dot x_1^r)^2-(\Gamma_{12}^{2,r}+\Gamma_{21}^{2,r})\dot x_1^r\dot x_2^r-\Gamma_{22}^{2,r}(\dot x_2^r)^2, 
\end{cases}
\end{equation}
$ r=\overline{1,N}.$ Consider now the $(s,\varphi)$-coordinates on $\mathbb M_\kappa^2$, i.e.\ take in system \eqref{one-body} the variables $$
s_r:=x_1^r, \ \ \varphi_r:=x_2^r,\ \ r=\overline{1,N},
$$
given in terms of extrinsic $(x,y,z)$-coordinates  by
$$
x_r={\rm sn}_\kappa s_r\cos\varphi_r,\ \ 
y_r={\rm sn}_\kappa s_r\sin\varphi_r,\ \ 
z_r=|\kappa|^{-1/2}{\rm csn}_\kappa s_r-|\kappa|^{-1/2},
r=\overline{1,N}.
$$
Notice that for $\kappa\to 0$, we have $z\to 0$ and the remaining relations give the polar coordinates in $\mathbb R^2$.

To make precise how we take the above limit, let us remark that while keeping the bodies fixed as $\kappa\to 0$, the geodesic distances $s_r, r=\overline{1,N},$ are not fixed. Indeed, we can write $s_r=|\kappa|^{-1/2}\alpha_r$, where
$\alpha_r$ is the angle from the centre of the sphere that subtends the arc of geodesic length $s_r$. The Euclidean/Minkowski distance corresponding to this arc is given by $\tau_r:=2\ \!{\rm sn}_\kappa(\alpha_r/2)$, so we can conclude that $s_r=2\ \!{\rm sn}_\kappa^{-1}(\tau_r/2)$. The quantity $\tau_r$ is assumed fixed as $\kappa$ varies, and
$s_r\to\tau_r$ as $\kappa\to 0$.

Differentiating the expressions given the change of coordinates we obtain
$$
dx_r={\rm csn}_\kappa s_r\cos\varphi_r\ \! ds-{\rm sn}_\kappa s_r\sin\varphi_r\ \! d\varphi_r,
$$
$$
dy_r={\rm csn}_\kappa s_r\sin\varphi_r\ \! ds_r +
{\rm sn}_\kappa s_r\cos\varphi_r\ \! d\varphi_r,
$$
$$
dz_r=-\sigma|\kappa|^{1/2}{\rm sn}_\kappa s_r\ \! ds_r,
$$
where $\sigma=1$ for $\kappa>0$, but $\sigma=-1$ for $\kappa<0$.
Using these expression of the differentials, we can compute the line element $dx_r^2+dy_r^2+\sigma dz_r^2$ and obtain that the metric tensor and its inverse are given by matrices of the form
$$
G_r=(g_{ij}^r)=\begin{pmatrix}
1 & 0\\
0 & {\rm sn}_\kappa^2s_r
\end{pmatrix},
\   \ \ 
G_r^{-1}=(g_r^{ij})=\begin{pmatrix}
1 & 0\\
0 & \frac{1}{{\rm sn}_\kappa^2s_r}
\end{pmatrix},
$$
respectively, and that they cover the entire spectrum of metrics for $\kappa\in\mathbb R$. So we obtain that
$$
g_{11}^r=1,\ g_{12}^r=g_{21}^r=0,\ g_{22}^r={\rm sn}_\kappa^2s_r,\
g^{11}_r=1,\ g^{12}_r=g^{21}_r=0,\ g^{22}_r=\frac{1}{{\rm sn}_\kappa^2s_r}.
 $$
Using the matrices $G$ and $G^{-1}$ as well as equations \eqref{Christoffel}, we compute the Christoffel symbols and obtain 
$$
\Gamma_{11}^{1,r}=\Gamma_{12}^{1,r}=\Gamma_{21}^{1,r}=\Gamma_{11}^{2,r}=\Gamma_{22}^{2,r}=0,
$$
$$
\Gamma_{22}^{1,r}=-{\rm sn}_\kappa s_r\ \!{\rm csn}_\kappa s_r,\ \
\Gamma_{12}^{2,r}=\Gamma_{21}^{2,r}={\rm ctn}_\kappa s_r.
$$
Using the above results and reintroducing the index $r$ to point out the dependence of the equations of motion on the position of each body, system \eqref{one-body} becomes
\begin{equation}
\begin{cases}
\ddot s_r=-\dfrac{\partial U_\kappa}{\partial s_r}+\dot\varphi_r^2\ \!{\rm sn}_\kappa s_r \ \!{\rm csn}_\kappa s_r\cr
\ddot\varphi_r=-{\rm sn}_\kappa^{-2}s_r\ \!\dfrac{\partial U_\kappa}{\partial\varphi_r}-2 \dot s_r\dot\varphi_r\ \!{{\rm ctn}_\kappa}s_r, \ \ r=\overline{1,N}.
\end{cases}
\end{equation}
Given the definitions of the unified trigonometric functions at $\kappa=0$, the above system takes in Euclidean space the form
\begin{equation}\label{polar2}
\begin{cases}
\ddot s_r=-\dfrac{\partial U_0}{\partial s_r}+s_r\dot\varphi_r^2\cr
\ddot\varphi_r=-s_r^{-2}\ \!\dfrac{\partial U_0}{\partial\varphi_r}-2s_r^{-1}\dot s_r\dot\varphi_r, \ \ r=\overline{1,N}.
\end{cases}
\end{equation}
But, as noted above, the $(s,\varphi)$-coordinates of $\mathbb S_\kappa^2$ and $\mathbb H_\kappa^2$ (for $\kappa\ne 0$), become polar coordinates in $\mathbb R^2$ (for $\kappa=0$), so if we write
$$
x_r=s_r\cos\varphi_r, \ \ y_r=s_r\sin\varphi_r,\ \ r=\overline{1,N},
$$
and perform the computations, system \eqref{polar2} takes the desired form \eqref{Newton2}. This remark completes the proof.
\end{proof}

\section{The 3-dimensional case}

In this section we consider the motion in $\mathbb M_\kappa^3$, which stands for  $\mathbb S_\kappa^3$ or $\mathbb H_\kappa^3$. Our main goal is to prove the following result.

\begin{theorem}
Consider the point masses (bodies) $m_1,\dots,m_N$ on the manifold
$\mathbb M_\kappa^3$ (representing $\mathbb S_\kappa^3$ or $\mathbb H_\kappa^3$), $\kappa\ne 0$, whose positions are given in hyperspherical coordinates $(s_r,\varphi_r,\theta_r), r=\overline{1,N}$. Then the equations of motion for these bodies are given by the system
\begin{equation}\label{motion3}
\begin{cases}
\ddot s_r=-\dfrac{\partial U_\kappa}{\partial s_r}+
(\dot\varphi_r^2\ \!+\dot\theta_r^2\sin^2\varphi_r)\ \!{\rm sn}_\kappa s_r\ \!{\rm csn}_\kappa s_r\cr
\vspace{-0.4cm}\cr
\ddot\varphi_r=-{\rm sn}_\kappa^{-2}s_r\ \!\dfrac{\partial U_\kappa}{\partial\varphi_r}+\theta_r^2 \sin\varphi_r\cos\varphi_r
-2\dot s_r\dot\varphi_r\ \!{\rm ctn}_\kappa s_r\cr
\vspace{-0.4cm}\cr
\ddot\theta_r=-{\rm sn}_\kappa^{-2}s_r \sin^{-2}\varphi_r\ \!\dfrac{\partial U_\kappa}{\partial\theta_r}-2\dot s_r\dot\theta_r\ \!{\rm ctn}_\kappa
s_r - 2\dot\varphi_r\dot\theta_r\cot\varphi_r, \ \ r=\overline{1,N},
\end{cases}
\end{equation}
where
$$
U_\kappa\colon(\mathbb M_\kappa^3)^N\setminus\Delta_\kappa\to(0,\infty),\ \ \kappa\ne 0, 
$$
represent the potentials, with the sets $\Delta_\kappa,\ \kappa\in\mathbb R,$ corresponding to singular configurations. Moreover, if
$
\lim_{\kappa\to 0}U_\kappa=U_0
$
while the Euclidean distances in $\mathbb R^4$ (and the Minkowski distances in $\mathbb R^{3,1}$) between the North Pole $N=(0,0,0,0)$ and the bodies are kept fixed, where
$$
U_0\colon(\mathbb R^3)^N\setminus\Delta_0\to(0,\infty)
$$ 
is the potential in the Euclidean case and $\Delta_0$ is the
corresponding set of singular configurations, then system \eqref{motion3} tends to the classical equations,
\begin{equation}\label{Newton3}
\ddot x_r=-\dfrac{\partial U_0}{\partial x_r},\ \
\ddot y_r=-\dfrac{\partial U_0}{\partial y_r}, \ \ 
\ddot z_r=-\dfrac{\partial U_0}{\partial z_r},\ \ r=\overline{1,N}.
\end{equation}
\end{theorem}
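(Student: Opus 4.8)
The plan is to follow, step for step, the argument used for the 2-dimensional case, inserting one additional angular coordinate. First I would specialize the general equations of motion \eqref{motion} to $n=3$, so that for each body $m_r$ there are three second-order equations, each of the schematic form $\ddot{x}_s^r=-\sum_{i=1}^3 g_r^{si}\,\partial U_\kappa/\partial x_i^r-\sum_{l,j=1}^3\Gamma_{lj}^{s,r}\dot{x}_l^r\dot{x}_j^r$, with $s=\overline{1,3}$. I would then set $s_r:=x_1^r$, $\varphi_r:=x_2^r$, $\theta_r:=x_3^r$, and write the hyperspherical change of coordinates in extrinsic form as
$$
x_r={\rm sn}_\kappa s_r\sin\varphi_r\cos\theta_r,\qquad y_r={\rm sn}_\kappa s_r\sin\varphi_r\sin\theta_r,
$$
$$
z_r={\rm sn}_\kappa s_r\cos\varphi_r,\qquad w_r=|\kappa|^{-1/2}{\rm csn}_\kappa s_r-|\kappa|^{-1/2}.
$$
A direct check using ${\kappa}\,{\rm sn}_\kappa^2 s+{\rm csn}_\kappa^2 s=1$ confirms that these points lie on $\mathbb M_\kappa^3$ and that the North Pole $s_r=0$ maps to $N=(0,0,0,0)$. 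Exactly as in the planar case, the limit is taken by fixing the Euclidean/Minkowski distance $\tau_r=2\,{\rm sn}_\kappa(\alpha_r/2)$, so that $s_r=2\,{\rm sn}_\kappa^{-1}(\tau_r/2)\to\tau_r$ as $\kappa\to 0$, while $w_r\to 0$ and the first three relations reduce to spherical coordinates in $\mathbb R^3$.

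Next I would differentiate this coordinate change, form the line element $dx_r^2+dy_r^2+dz_r^2+\sigma\,dw_r^2$ with $\sigma=1$ for $\kappa>0$ and $\sigma=-1$ for $\kappa<0$, and read off the (diagonal) metric tensor $G_r={\rm diag}\big(1,\,{\rm sn}_\kappa^2 s_r,\,{\rm sn}_\kappa^2 s_r\sin^2\varphi_r\big)$, whose inverse $G_r^{-1}$ is the diagonal matrix of reciprocals. Substituting $G_r$ and $G_r^{-1}$ into \eqref{Christoffel}, I expect the only nonvanishing Christoffel symbols to be $\Gamma_{22}^{1,r}=-{\rm sn}_\kappa s_r\,{\rm csn}_\kappa s_r$, $\Gamma_{33}^{1,r}=-{\rm sn}_\kappa s_r\,{\rm csn}_\kappa s_r\sin^2\varphi_r$, $\Gamma_{12}^{2,r}=\Gamma_{21}^{2,r}={\rm ctn}_\kappa s_r$, $\Gamma_{33}^{2,r}=-\sin\varphi_r\cos\varphi_r$, $\Gamma_{13}^{3,r}=\Gamma_{31}^{3,r}={\rm ctn}_\kappa s_r$, and $\Gamma_{23}^{3,r}=\Gamma_{32}^{3,r}=\cot\varphi_r$. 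Feeding these into the three specialized equations of motion produces precisely system \eqref{motion3}.

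Finally I would pass to the limit $\kappa\to 0$. Because every unified trigonometric function is continuous in $\kappa$, with ${\rm sn}_\kappa s_r\to s_r$, ${\rm csn}_\kappa s_r\to 1$ and ${\rm ctn}_\kappa s_r\to s_r^{-1}$, and because $\lim_{\kappa\to 0}U_\kappa=U_0$ with the distances $\tau_r$ held fixed, system \eqref{motion3} converges termwise to the spherical-coordinate form of Newton's equations in $\mathbb R^3$. Writing then $x_r=s_r\sin\varphi_r\cos\theta_r$, $y_r=s_r\sin\varphi_r\sin\theta_r$, $z_r=s_r\cos\varphi_r$ and carrying out the change back to Cartesian coordinates recovers the classical system \eqref{Newton3}, just as the polar-to-Cartesian reduction completed the proof in dimension two.

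I expect the difficulty to be computational rather than conceptual. Relative to the planar case, dimension three introduces the genuinely new, $\kappa$-independent mixed angular terms governed by $\Gamma_{33}^{2,r}$ and $\Gamma_{23}^{3,r}$, which must be tracked carefully through both the derivation of \eqref{motion3} and the final spherical-to-Cartesian reduction; keeping the enlarged set of Christoffel symbols straight and reducing the angular terms without error is the most delicate part. The only analytic subtlety---the interchange of the limit $\kappa\to 0$ with the differentiation of the potential---is resolved, exactly as in the 2-dimensional theorem, by the hypothesis $\lim_{\kappa\to 0}U_\kappa=U_0$ together with the continuity in $\kappa$ of the coordinate change.
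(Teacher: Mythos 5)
Your proposal is correct and follows essentially the same route as the paper's own proof: specialize the general Euler--Lagrange system to $n=3$, pass to hyperspherical coordinates, read off the diagonal metric ${\rm diag}(1,{\rm sn}_\kappa^2 s_r,{\rm sn}_\kappa^2 s_r\sin^2\varphi_r)$ and its Christoffel symbols, and then let $\kappa\to 0$ with the Euclidean/Minkowski distances held fixed before converting back to Cartesian coordinates. The only (immaterial) difference is that you swap the roles of $\sin\theta_r$ and $\cos\theta_r$ in the extrinsic expressions for $x_r$ and $y_r$, which leaves the metric, the Christoffel symbols, and the resulting equations unchanged.
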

\begin{proof}
In $\mathbb M_\kappa^3$, system \eqref{motion} takes the form
\begin{equation}\label{one-body3}
\begin{cases}
\ddot{x}_1^r=-g^{11}_r\dfrac{\partial U_\kappa}{\partial x_1^r}-g^{12}_r\dfrac{\partial U_\kappa}{\partial x_2^r}
-g^{13}_r\dfrac{\partial U_\kappa}{\partial x_3^r}-\Gamma_{11}^{1,r}(\dot x_1^r)^2-\Gamma_{22}^{1,r}(\dot x_2^r)^2-\Gamma_{33}^{1,r}(\dot x_3^r)^2\cr
{} \ \ \  -(\Gamma_{12}^{1,r}+\Gamma_{21}^{1,r})\dot x_1^r\dot x_2^r-(\Gamma_{13}^{1,r}+\Gamma_{31}^{1,r})\dot x_1^r\dot x_3^r-(\Gamma_{23}^{1,r}+\Gamma_{32}^{1,r})\dot x_2^r\dot x_3^r\cr
\vspace{-0.35cm}\cr
\ddot{x}_2^r=-g^{21}_r\dfrac{\partial U_\kappa}{\partial x_1^r}-g^{22}_r\dfrac{\partial U_\kappa}{\partial x_2^r}
-g^{23}_r\dfrac{\partial U_\kappa}{\partial x_3^r}-\Gamma_{11}^{2,r}(\dot x_1^r)^2-\Gamma_{22}^{2,r}(\dot x_2^r)^2-\Gamma_{33}^{2,r}(\dot x_3^r)^2\cr
{} \ \ \ -(\Gamma_{12}^{2,r}+\Gamma_{21}^{2,r})\dot x_1^r\dot x_2^r-(\Gamma_{13}^{2,r}+\Gamma_{31}^2)\dot x_1^r\dot x_3^r-(\Gamma_{23}^{2,r}+\Gamma_{32}^{2,r})\dot x_2^r\dot x_3^r\cr
\vspace{-0.35cm}\cr
\ddot{x}_3^r=-g^{31}_r\dfrac{\partial U_\kappa}{\partial x_1^r}-g^{32}_r\dfrac{\partial U_\kappa}{\partial x_2^r}
-g^{33}_r\dfrac{\partial U_\kappa}{\partial x_3^r}-\Gamma_{11}^{3,r}(\dot x_1^r)^2-\Gamma_{22}^{3,r}(\dot x_2^r)^2-\Gamma_{33}^{3,r}(\dot x_3^r)^2\cr
{} \ \ \ -(\Gamma_{12}^{3,r}+\Gamma_{21}^{3,r})\dot x_1^r\dot x_2^r-(\Gamma_{13}^{3,r}+\Gamma_{31}^{3,r})\dot x_1^r\dot x_3^r-(\Gamma_{23}^{3,r}+\Gamma_{32}^{3,r})\dot x_2^r\dot x_3^r,  r=\overline{1,N}.
\end{cases}
\end{equation}
Consider now the intrinsic $(s,\varphi,\theta)$-coordinates on the manifolds $\mathbb M_\kappa^3$, i.e.\ take 
$$
s_r:=x_1^r,\ \  \varphi_r:=x_2^r, \ \ \theta_r:=x_3^r, \ \ r=\overline{1,N},
$$ 
given in terms of the extrinsic $(x,y,z,w)$-coordinates by
\begin{equation}\label{spherical-transf}
\begin{cases}
x_r={\rm sn}_\kappa s_r\sin\varphi_r\sin\theta_r\cr
y_r={\rm sn}_\kappa s_r\sin\varphi_r\cos\theta_r\cr
z_r={\rm sn}_\kappa s_r\cos\varphi_r\cr
w_r=|\kappa|^{-1/2}{\rm csn}_\kappa s_r-|\kappa|^{-1/2}, \ \ r=\overline{1,N}.
\end{cases}
\end{equation}
Notice that for $\kappa\to 0$, we have $w\to 0$ and the remaining relations give the spherical coordinates in $\mathbb R^3$. The remark we made in the proof of Theorem 1 (that the quantities $s_r, r=\overline{1,N}$, vary as we keep the Euclidean/Minkowski distance fixed, but tend to that distance as $\kappa\to 0$) applies here too without any change. 

Differentiating in \eqref{spherical-transf}, we obtain
$$
dx_r={\rm csn}_\kappa s_r\sin\varphi_r\sin\theta_r\ \! ds_r
+{\rm sn}_\kappa s_r\cos\varphi_r\sin\theta_r \ \! d\varphi_r
+{\rm sn}_\kappa s_r\sin\varphi_r\cos\theta_r \ \! d\theta_r,
$$
$$
dy_r={\rm csn}_\kappa s_r\sin\varphi\_rcos\theta_r\ \! ds_r
+{\rm sn}_\kappa s_r\cos\varphi_r\cos\theta_r \ \! d\varphi_r
-{\rm sn}_\kappa s_r\sin\varphi_r\sin\theta_r \ \! d\theta_r,
$$
$$
dz_r={\rm csn}_\kappa s_r\cos\varphi_r\ \! ds_r
-{\rm sn}_\kappa s_r\sin\varphi_r \ \! d\varphi_r
$$
$$
dw_r=-\sigma|\kappa|^{1/2}{\rm sn}_\kappa s_r\ \! ds_r.
$$
Using these expressions we can compute the line element $dx_r^2+dy_r^2+dz_r^2+\sigma dw_r^2$ to obtain the metric tensor and its inverse in matrix form,
$$
G_r=(g_{ij}^r)=\begin{pmatrix}
1 & 0 & 0\\
0 & {\rm sn}_\kappa^2s_r & 0\\
0 & 0 & {\rm sn}_\kappa^2 s_r\sin^2\varphi_r
\end{pmatrix},\ \  
G^{-1}_r=(g^{ij}_r)=\begin{pmatrix}
1 & 0 & 0\\
0 & \frac{1}{{\rm sn}_\kappa^2s_r} & 0\\
0 & 0 & \frac{1}{{\rm sn}_\kappa^2 s_r\sin^2\varphi_r}
\end{pmatrix}.
$$
These matrices give all the metrics for $\kappa\in\mathbb R$. We can thus write that
$$
g_{11}^r=1,\ g_{12}^r=g_{13}^r=g_{21}^r=g_{23}^r=g_{31}^r=g_{32}^r=0,\ g_{22}^r={\rm sn}_\kappa^2 s_r,\ 
g_{33}^r={\rm sn}_\kappa^2 s_r\sin^2\varphi_r,
$$
$$
g^{11}_r=1,\ g^{12}_r=g^{13}_r=g^{21}_r=g^{23}_r=g^{31}_r=g^{32}_r=0,\ g^{22}_r=\frac{1}{{\rm sn}_\kappa^2 s_r},\
g^{33}_r=\frac{1}{{\rm sn}_\kappa^2 s_r\sin^2\varphi_r}.
$$
Then we can compute the Christoffel symbols and obtain
$$
\Gamma_{11}^{1,r}=\Gamma_{11}^{2,r}=\Gamma_{22}^{2,r}=\Gamma_{12}^{1,r}=\Gamma_{21}^{1,r}=
\Gamma_{13}^{1,r}=\Gamma_{31}^{1,r}=
\Gamma_{23}^{1,r}=\Gamma_{32}^{1,r}=0,
$$
$$
\Gamma_{22}^{1,r}=-{\rm sn}_\kappa s_r\ \!{\rm csn}_\kappa s_r,\ \
\Gamma_{33}^{1,r}=-{\rm sn}_\kappa s_r\ \!{\rm csn}_\kappa s_r\ \!
\sin^2\varphi_r,
$$
$$
\Gamma_{13}^{2,r}=\Gamma_{31}^{2,r}=\Gamma_{23}^{2,r}=\Gamma_{32}^{2,r}=\Gamma_{11}^{3,r}=\Gamma_{22}^{3,r}=
\Gamma_{33}^{3,r}=\Gamma_{12}^{3,r}=\Gamma_{21}^{3,r}=0,
$$
$$
\Gamma_{33}^{2,r}=-\sin\varphi_r\cos\varphi_r, 
\Gamma_{12}^{2,r}=\Gamma_{21}^{2,r}=\Gamma_{13}^{3,r}=\Gamma_{31}^{3,r}={\rm ctn}_\kappa s_r, 
\Gamma_{23}^{3,r}=\Gamma_{32}^{3,r}=\cot\varphi_r.
$$
Then system \eqref{one-body3} becomes 
\begin{equation}
\begin{cases}
\ddot s_r=-\dfrac{\partial U_\kappa}{\partial s_r}+
(\dot\varphi_r^2\ \!+\dot\theta_r^2\sin^2\varphi_r)\ \!{\rm sn}_\kappa s_r\ \!{\rm csn}_\kappa s_r\cr
\vspace{-0.4cm}\cr
\ddot\varphi_r=-{\rm sn}_\kappa^{-2}s_r\ \!\dfrac{\partial U_\kappa}{\partial\varphi_r}+\theta_r^2 \sin\varphi_r\cos\varphi_r
-2\dot s_r\dot\varphi_r\ \!{\rm ctn}_\kappa s_r\cr
\vspace{-0.4cm}\cr
\ddot\theta_r=-{\rm sn}_\kappa^{-2}s_r \sin^{-2}\varphi_r\ \!\dfrac{\partial U_\kappa}{\partial\theta_r}-2\dot s_r\dot\theta_r\ \!{\rm ctn}_\kappa
s_r - 2\dot\varphi_r\dot\theta_r\cot\varphi_r, \ \ r=\overline{1,N}.
\end{cases}
\end{equation}
Given the definitions of the unified trigonometric functions at $\kappa=0$, the above system takes in Euclidean space the form
\begin{equation}\label{final}
\begin{cases}
\ddot s_r=-\dfrac{\partial U_\kappa}{\partial s_r}+
s_r(\dot\varphi_r^2+\dot\theta_r^2\ \!\sin^2\varphi_r)\cr
\vspace{-0.4cm}\cr
\ddot\varphi_r=-s_r^{-2}\ \!\dfrac{\partial U_\kappa}{\partial\varphi_r}+\theta_r^2 \sin\varphi_r\cos\varphi_r
-2s_r^{-1}\dot s_r\dot\varphi_r\cr
\vspace{-0.4cm}\cr
\ddot\theta_r=-s_r^{-2} \sin^{-2}\varphi_r\ \!\dfrac{\partial U_\kappa}{\partial\theta_r}-2s_r^{-1}\dot s_r\dot\theta_r - 2\dot\varphi_r\dot\theta_r\cot\varphi_r, \ \ r=\overline{1,N}.
\end{cases}
\end{equation}
Since the relations in \eqref{spherical-transf} provide us with
the spherical coordinates in $\mathbb R^3$, for $\kappa=0$ we can write that
$$
x_r=s_r\sin\varphi_r\sin\theta_r,\ \
y_r=s_r\sin\varphi_r\cos\theta_r, \ \
z_r=s_r\cos\varphi_r, \ \ r=\overline{1,N}.
$$
Then straightforward computations make system \eqref{final} take the classical form \eqref{Newton3}. 
This remark completes the proof.
\end{proof}

\section{Application to the gravitational $N$-body problem}

In this section we will apply Theorem 2 to the gravitational $N$-body problem. As we mentioned in the introduction, the potential we choose to define on spheres and hyperbolic spheres, which has a long history, provides the natural extension of gravitation to spaces of constant curvature. The application of Theorem 1 would follow in the same way. So we consider the cotangent potential function
given by
\begin{equation}\label{N-pot}
U_\kappa({\bf q})=-\sum_{1\le i<j\le N}m_im_j{\rm ctn}_\kappa
(d_\kappa(m_i,m_j)),\ \ \kappa\ne 0,
\end{equation}
where $d_\kappa(m_i,m_j)$ represents the geodesic distance between the bodies $m_i$ and $m_j$, defined as
$$
d_\kappa(m_i,m_j)=|\kappa|^{-1/2}{\rm csn}_\kappa^{-1}
(\kappa{\bf q}_i\cdot{\bf q}_j),\ \ \kappa\ne 0,
$$
with ${\bf q}_i=(x_i,y_i,z_i,w_i)$ representing the position vector of the body $m_i,\ i=\overline{1,N}$, in a framework having the origin at the centre of the spheres. Obviously, the Euclidean distance is then
$$
d_0(m_i,m_j)=|{\bf q}_i-{\bf q}_j|.
$$
Notice that, for now, the position where the origin of the coordinate system is placed is irrelevant. Indeed, $d_\kappa(m_i,m_j)$ does not depend on this choice. However,
the choice of the origin will become relevant later, when we take the limit $\kappa\to 0$.

Straightforward computations that use the definitions of
${\rm sn}_\kappa, {\rm csn}_\kappa,$ and ${\rm ctn}_\kappa$
transform the potential \eqref{N-pot} into
\begin{equation}
U_\kappa({\bf q})=-\sum_{1\le i<j\le N}\frac{m_im_j|\kappa|^{1/2}\kappa q^{ij}}{|(\kappa q_i^2)(\kappa q_j^2)-(\kappa q^{ij})^2|^{1/2}},
\end{equation}
where we denoted
$$
q^{ij}={\bf q}_i\cdot{\bf q}_j, \ i,j=\overline{1,N},\ i\ne j, \ \ q_i^2={\bf q}_i\cdot{\bf q}_i, \ i=\overline{1,N}.
$$ 
Notice that $U_\kappa$ is defined on $\mathbb R^{4N}$,
except at the points where at least one of the denominators cancels. If we restrict the configuration space to the
manifolds $\mathbb M_\kappa^3$, i.e.\ $\mathbb S_\kappa^3$
or $\mathbb H_\kappa^3$, then 
$$
U_\kappa\colon ({\mathbb M}_\kappa^3)^N\setminus\Delta_\kappa\to(0,\infty),
$$
where $\Delta_\kappa$ represents the set of collisions and,
in case of spheres, the antipodal points as well. We would like to show now that 
\begin{equation}
\lim_{\kappa\to 0}U_\kappa({\bf q})=U_0({\bf q}):=-\sum_{1\le i<j\le N}\frac{m_im_j}{|{\bf q}_i-{\bf q}_j|},
\end{equation}
while the Euclidean distances in $\mathbb R^4$ (and the Minkowski distances in $\mathbb R^{3,1}$) between the North Pole and the bodies are fixed when $\kappa$ varies. For this let us denote
$$
q_{ij}=[(x_i-x_j)^2+(y_i-y_j)^2+(z_i-z_j)^2+\sigma(w_i-w_j)^2]^{1/2},
$$
which is the Euclidean distance in $\mathbb R^4$ for $\kappa>0$ and the Minkowski distance in $\mathbb R^{3,1}$ for $\kappa<0$. We can now proceed as in \cite{Diacu05}, where we considered the $N$-body problem in the context of curved space. So notice that
$$
2q^{ij}=q_i^2+q_j^2-q_{ij}^2.
$$
Then the potential function can be written in the ambient space as
\begin{equation}\label{gen-forcef}
U_\kappa({\bf q})=-\sum_{1\le i<j\le N}\frac{m_im_j(\kappa q_i^2+\kappa q_j^2-\kappa q_{ij}^2)}{[2(\kappa q_i^2+\kappa q_j^2)q_{ij}^2-\kappa(q_i^2-q_j^2)^2-\kappa q_{ij}^4]^{1/2}}.
\end{equation}
On the manifolds of constant curvature $\kappa$,  $U_\kappa$ becomes
\begin{equation}
U_\kappa({\bf q})=-\sum_{1\le i<j\le N}\frac{m_im_j(2-\kappa q_{ij}^2)}{q_{ij}(4-\kappa q_{ij}^2)^{1/2}},
\end{equation}
since $\kappa q_i^2=1,\  i=\overline{1,N}$. To apply Theorem 2, we must shift the origin of the coordinate system to the North Pole of all spheres, which
is also common for the vertices of all hyperbolic spheres.
The form of $U_\kappa$ does not change when we perform this transformation of coordinates because the expression of $U_\kappa$ depends only on the mutual (Euclidean/Minkowski) distances. If we keep these distances between the North Pole and the bodies fixed as $\kappa\to 0$, the mutual Euclidean/Minkowski distances between bodies remain fixed as well, so $\kappa q_{ij}\to 0$, and consequently $U_\kappa\to U_0$. By Theorem 2, the vector field defined by this potential for $\kappa\ne 0$ tends to the Newtonian vector field as $\kappa\to 0$.

\bigskip
\noindent{\bf Acknowledgment.} The authors are indebted to NSERC of Canada for partial financial support through its Discovery Grants programme.

\end{document}